\let\@fnsymbol\@arabic
\newtheorem{thm}{Theorem}[section]
\newtheorem{prop}{Proposition}[section]
\theoremstyle{definition}
\newtheorem{defi}{Definition}[section]
\newtheorem{exa}{Example}[section]
\def\Z{\mathbb{Z}}
\newcommand{\Ai}{\mathcal{A}}
\newcommand{\Fi}{\mathcal{F}}
\newcommand{\Li}{\mathcal{L}}
\newcommand{\Ri}{\mathcal{R}}
\newcommand{\Gb}{\mathbb{G}}
\newcommand{\Hb}{\mathbb{H}}
\newcommand{\Lb}{\mathbb{L}}
\newcommand{\Mb}{\mathbb{M}}
\newcommand{\domi}{dom}
\newcommand{\dom}{dom\,}
\begin{document}
\author[M. Aranguren]{Maira Aranguren$^{\MakeLowercase{1}}$}
\address{$^{\MakeLowercase{1}}$Universidad Nacional Experimental 
Polit\'ecnica Antonio Jos\'e de Sucre. 
Sección de Matem\'aticas. Av. Corpahuaico
Barquisimeto, Venezuela.}
\email{aranmarve@gmail.com}
\email{jorgkmpos@gmail.com}
\email{ramon.alberto.vivas@gmail.com}

\author[J. Campos]{Jorge Campos$^{\MakeLowercase{1}}$}

\author[N. Romero]{Neptal\'i Romero$^{\MakeLowercase{2}}$}
\address{$^{\MakeLowercase{2}}$Universidad Centroccidental
Lisandro Alvarado. Departamento
de Matem\'atica. Decanato de Ciencias y Tecnolog\'{\i}a.
Apartado Postal 400.
Barquisimeto, Venezuela.}
\email{neptali.romero@gmail.com (Corresponding author)}

\author[R. Vivas]{Ram\'on Vivas$^{\MakeLowercase{1}}$}

\title{Shift spaces, Languages and Transfinite induction}
\begin{abstract}
This paper deals with an extension of the classical concept of shift space,
which corresponds to any
shift-invariant closed subset of the Cartesian product of a particular finite set
(alphabet) 
endowed with the prodiscrete topology. 
In such an extended framework the notion of language is introduced and  
a characterization is shown. In order to do this, transfinite induction is required 
because the cardinality of the index set of the product may not be countable.
\end{abstract}

\subjclass[2010]{37B10}
\keywords{shift space, language, product topology}
\maketitle
\section{Introduction}

In the classical context, a shift space is a special subset of a
particular product space. More precisely, let $\Ai$ be a finite 
nonempty set (an {\em alphabet}) endowed with the discrete topology.
On the Cartesian product $\Ai^\Z$ of all
doubly infinite sequences $x:\Z\to\Ai$ 
is considered the {\em prodiscrete topology}, that is the product topology
provided by the discrete topology of $\Ai$; this topological space
is known as the {\em full shift}.
The {\em shift mapping} $\sigma:\Ai^\Z\to\Ai^\Z$ maps a
sequence $x$ to the sequence $\sigma(x)$ whose $i$th coordinate 
$\sigma(x)_i$ is $x_{i+1}$
for every $i\in\Z$, this means that $\sigma(x)$ is obtained from $x$
by shifting each of its coordinates one place to the left. The mapping
$\sigma$ is continuous and bijective, its inverse $\sigma^{-1}$ shifts the
coordinates one place to the right; so $\sigma$ is a homeomorphism of 
$\Ai^\Z$. With these ingredients, a {\em shift space} (or {\em subshift})
of the full shift $\Ai^\Z$ is any nonempty closed subset $X$ of $\Ai^\Z$
satisfying the {\em shift-invariant} property, that is $\sigma(X)=X$. 

It is simple to obtain shift spaces of $\mathcal{A}^\mathbb{Z}$.
To this end, we first recall that a {\em word} (or {\em block}) 
over $\mathcal{A}$ is a finite sequence 
of symbols in $\mathcal{A}$, the number of symbols in it is called the 
{\em length} of the word. Every word over $\Ai$ is 
clearly identified with
an element of the Cartesian product $\Ai^k$ for some $k\geq 0$,
the {\em empty word}, usually denoted by $\epsilon$, lies in 
$\mathcal{A}^0$.
Let $\mathcal{A}^*=\bigcup_{k\geq 0}\mathcal{A}^k$ denote the set of 
blocks over $\mathcal{A}$, we say that a block 
$w\in\mathcal{A}^*$ {\em appears} in $x\in\mathcal{A}^{\mathbb{Z}}$
(in symbols $w\sqsubset x$)
if there exist $i,j\in\mathbb{Z}$ ($i\leq j$) such that 
$[i,j]=\{i,i+1,\cdots,j\}$ 
and the restriction $x\vert_{[i,j]}$ of $x$ to 
$[i,j]$ is just $w$; that is $x\vert_{[i,j]}=x_i\cdots x_j=w$. In this way
it is well known, see {\cite[Definition 2.1, Theorem 6.1.21]{lind}}, that for
any $\mathcal{F}\subset \mathcal{A}^*$ the set
$$
X_{\mathcal{F}}=\{x\in\mathcal{A}^\mathbb{Z}: 
\mbox{$x\vert_{[i,j]}\notin \mathcal{F}$ 
for every $i,j\in\Z$ with $i\leq j$}\}
$$
is a shift space. Note that if $\mathcal{F}=\emptyset$, then $X_{\mathcal{F}}$
is the full shift. The set $\mathcal{F}$ is called {\em a set of forbidden words} for the
shift space $X_{\mathcal{F}}$. We would like to emphasize that it is 
always possible to find
different nonempty sets $\mathcal{F}_1,\mathcal{F}_2$ of forbidden words 
for the same shift space; that is
$X_{\mathcal{F}_1}=X_{\mathcal{F}_2}$. 

Naturally a question arises:  
can any shift space be described as $X_{\mathcal{F}}$ for some 
$\mathcal{F}\subset\mathcal{A}^*$? The answer is yes and it is related to
a particular set of words. 
Given a shift space $X\subset \mathcal{A}^\mathbb{Z}$, a word
over $\mathcal{A}$ is called {\em allowed} for $X$ if it appears in some element of 
$X$. The set $\mathcal{L}(X)$ of all allowed words for the shift space 
$X$ is called the {\em language} of $X$, that is
$$
\mathcal{L}(X)=\{w\in\mathcal{A}^*: 
\mbox{there exists $x$ in $X$ such that $w\sqsubset x$}\}.
$$ 
Notice that $X\subset X_{\mathcal{L}(X)^c}$ where 
$\mathcal{L}(X)^c$ is the complementary set $\mathcal{A}^*\setminus \mathcal{L}(X)$ 
of $\mathcal{L}(X)$. Actually $X= X_{\mathcal{L}(X)^c}$, the reciprocal inclusion 
follows from the closedness property of $X$; so  
$\mathcal{L}(X)^c$ is the largest set of forbidden words for $X$.
Consequently shift spaces with the same language are identical.

The language of any shift space satisfies the {\em factorial} 
and {\em extendable} properties, these are respectively:

\begin{enumerate}[$L_1$)]
\item
If $w$ is a block in $\mathcal{L}(X)$ and $u$ is a subblock of $w$, 
then $u\in\mathcal{L}(X)$.
\item
If $w\in\mathcal{L}(X)$, then there are nonempty blocks $u,v\in\mathcal{L}(X)$ 
such that the concatenation block $uwv$ is also in $\mathcal{L}(X)$. 
\end{enumerate}

Certainly these two properties have important relevance: they characterize the 
language of shift spaces of the full shift $\mathcal{A}^\mathbb{Z}$, more exactly:

\begin{thm}[{\cite[Proposition 1.3.4]{lind}}]
\label{thm:t1}
If $\Li$ is a subset of $\Ai^*$ with properties $L_1$ and $L_2$, then there
exists a shift space $X\subset\Ai^\Z$ such that $\Li(X)=\Li$.
\end{thm}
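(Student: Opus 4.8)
The plan is to feed the hypotheses straight into the construction recalled above. Given $\Li\subset\Ai^*$ with properties $L_1$ and $L_2$, set $\Fi=\Li^c=\Ai^*\setminus\Li$ and let $X=X_{\Fi}$; by the cited fact $X$ is automatically closed and shift-invariant, so I only need to verify that $X$ is nonempty and that $\Li(X)=\Li$. (Strictly one must also assume $\Li\neq\emptyset$: the empty set vacuously satisfies $L_1$ and $L_2$ but is the language of no shift space. I use throughout the standard convention that the blocks occurring in $\Li$, and the subblocks in $L_1$, are nonempty, in keeping with the definition of $\sqsubset$; note that $L_2$ in any case rules out $\Li=\{\epsilon\}$.) So fix once and for all a nonempty $w\in\Li$.

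The inclusion $\Li(X)\subseteq\Li$ I would dispose of immediately: if $w\in\Li(X)$, say $w=x\vert_{[i,j]}$ with $x\in X$ and $i\le j$, then since no subblock of $x$ lies in $\Fi=\Li^c$ we get $w\notin\Li^c$, i.e. $w\in\Li$.

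The substance is the reverse inclusion $\Li\subseteq\Li(X)$, and this is exactly where $L_1$ and $L_2$ are used; it also yields $X\neq\emptyset$. The idea is to take $w\in\Li$ and iterate $L_2$ to ``grow'' $w$ into a bi-infinite point of $X$ in which $w$ appears. Concretely, I would build recursively blocks $w^{(0)}=w,\,w^{(1)},\,w^{(2)},\dots$ in $\Li$ by applying $L_2$ to choose nonempty $u_{n+1},v_{n+1}\in\Li$ with $w^{(n+1)}:=u_{n+1}w^{(n)}v_{n+1}\in\Li$, and record positions via $a_0=0$, $b_0=|w|-1$, $a_{n+1}=a_n-|u_{n+1}|$, $b_{n+1}=b_n+|v_{n+1}|$, viewing $w^{(n)}$ as sitting on the coordinate interval $[a_n,b_n]$. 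These intervals are nested and $w^{(n+1)}$ restricts to $w^{(n)}$ on $[a_n,b_n]$, so the partial assignments are coherent; because $|u_{n+1}|,|v_{n+1}|\ge 1$ at every step, the endpoints satisfy $a_n\to-\infty$ and $b_n\to+\infty$, hence $\bigcup_n[a_n,b_n]=\Z$. Then the prescription $x_k:=(w^{(n)})_{k-a_n}$, for any $n$ with $a_n\le k\le b_n$, defines a point $x\in\Ai^\Z$ with $x\vert_{[0,\,|w|-1]}=w$, so $w\sqsubset x$. Finally $x\in X$: given $i\le j$, pick $n$ with $[i,j]\subset[a_n,b_n]$; then $x\vert_{[i,j]}$ is a subblock of $w^{(n)}\in\Li$, hence lies in $\Li$ by $L_1$, hence is not in $\Li^c$. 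Thus $x\in X_{\Li^c}=X$. In particular $X\neq\emptyset$, and every $w\in\Li$ has been shown to appear in a point of $X$, i.e. $w\in\Li(X)$; this completes the argument.

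The only genuinely delicate point is the recursion in the previous paragraph: one has to be sure the words really lengthen on \emph{both} sides at every step — which is precisely why $L_2$ insists that $u$ and $v$ be nonempty — so that the nested intervals exhaust all of $\Z$, and one has to check that the limiting sequence is well defined, which rests on the pieces of the $w^{(n)}$ agreeing where they overlap. The rest is unwinding the definitions. Note that no transfinite recursion enters here, since $\Z$ is countable and the construction is an ordinary recursion on $\mathbb{N}$; transfinite induction will be required only in the later version of this theorem, where the index set of the product is allowed to be uncountable.
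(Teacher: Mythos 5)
Your proof is correct and follows essentially the same route the paper takes: it is the standard argument behind the cited Proposition 1.3.4 of Lind--Marcus, and it is exactly the strategy the authors redeploy in their proof of Proposition~\ref{countable} (take $\Fi=\Li^c$, form $X_\Fi$, get $\Li(X)\subseteq\Li$ directly from the definition, and obtain the reverse inclusion by recursively extending a given word via $L_2$ and verifying membership in $X_\Fi$ via $L_1$). Your side remark about excluding $\Li=\emptyset$ is a reasonable point of convention, since the paper requires shift spaces to be nonempty.
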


We refer to the classical textbooks \cite{kitchens} and \cite{lind} for 
basic notions, technical tools and problems on the standard
theory of shift spaces; see also \cite{cecc} and its newly published companion 
book \cite{cecc2}. 
Although from a classical point of view shift spaces 
are considered over finite alphabets, in certain environments it is necessary to 
consider alphabets with infinitely many symbols, such is the situation, for example, when the 
thermodynamic formalism is developed for the so-called {\em countable state Markov shifts}, 
see \cite{kitchens} and \cite{sarig}. Shift spaces over alphabets with 
infinite symbols were treated by M. Gromov in his seminal work on endomorphisms 
of symbolic algebraic varieties and topological invariants of dynamical systems, 
see \cite{gromov} and \cite{gromov2}. 
Indeed, the notion of shift space over infinite alphabets and universes other than
$\mathbb{Z}$
has had different approaches and uses in multiple contexts, even various definitions 
have been proposed in correlation to different purposes; see for example
\cite{cecc}, \cite{goncalves2}, \cite{goncalves}, \cite{ott} and \cite{sobbo}.
These outstanding facts point to the interest of the 
study of extensions of shift spaces.

The main goal of this paper is to introduce and
discuss some basic properties of an extension of the
standard notion of shift space. Our proposal maintains closedness and
shift invariance as core properties of the extended notion that
we call {\em $\mathbb{H}$-subshift}. It
is presented on the product space $\mathcal{A}^\mathbb{G}$, 
where $\mathcal{A}$ (called alphabet) is a discrete topological space,
$\mathbb{G}$ is a group of infinite order and $\mathbb{H}$ is a subgroup
of $\mathbb{G}$;
the precise definition is in Section \ref{Sec2}, see Definition \ref{defhshift}. 
Indeed, this kind of subshifts is an extension of 
the notion of shift space introduced in \cite{cecc}, where some properties of those 
shift spaces and the cellular automata defined on them are described throughout 
different exercise sections of that textbook. 
After the introduction of the $\mathbb{H}$-subshift's concept our study
follow the standard route. First, in the extended setting
the notions of allowed and forbidden patterns
are introduced, they are $\mathcal{A}$-valued functions whose domains are
finite subsets of $\mathbb{G}$ 
generalizing the classical concepts of 
allowed and forbidden words. Then we introduce the concept of language for
$\mathbb{H}$-subshifts and we discuss its most basic properties, including
a version of Theorem \ref{thm:t1} when $\mathbb{G}$ is a countable
group. The arguments that we use to prove that version do not work when
$\mathbb{G}$ is uncountable. Although an appropriate order on $\mathbb{G}$
could be considered, uncountable cardinality imposes certain obstacles to
our arguments; among others, a closed
interval in $\mathbb{G}$ is not always a finite set, as it happens in $\mathbb{Z}$.
Thus, in order to state and prove an uncountable version of 
Theorem \ref{thm:t1}, we introduce in Definition \ref{extpatt} 
a more general notion of patterns: the extended patterns, 
they are $\mathcal{A}$-valued functions
defined on some subsets of $\mathbb{G}$ including the finite ones.
Finally, we prove that if $\mathcal{L}_\infty$ is a set of extended patterns 
satisfying certain properties, something like in Theorem \ref{thm:t1}, 
then there is an $\mathbb{H}$-subshift whose language is the set of functions 
in $\mathcal{L}_\infty$ having finite domain. This is precisely our last result 
in the article: Theorem \ref{principal} below.
\section{$\Hb$-shift spaces and their languages}\label{Sec2}

Let $\mathcal{A}$ be an alphabet: a discrete topological space
with at least two elements, and let $\Gb$ be a
group of infinite order with identity element $e$. On the 
Cartesian product $\Ai^\Gb$ (set of all functions from $\Gb$ to $\Ai$) the
prodiscrete topology is considered, it has as a base the set of all cylinders 
$$
C(P)=\{x\in\Ai^\Gb: x\vert_{\dom P}=P\},
$$ 
where $P$ is an
$\Ai$-valued function whose 
domain $dom\,P$ is a finite subset of $\Gb$ and $x\vert_{dom\,P}$ denotes the 
restriction of $x:\Gb\to\Ai$ to $dom\,P$. A function $P$ as above is called
{\em pattern} over the group $\Gb$ and alphabet $\Ai$ (or simply pattern).
The elements of $\Ai^\Gb$ are
called {\em configurations} and $\Gb$ is the {\em universe} of 
the {\em full shift} $\Ai^\Gb$. We recall that the space $\Ai^\Gb$ is Hausdorff,
perfect and totally disconnected; furthermore, $\Ai^\Gb$ is compact iff 
$\Ai$ is finite and it is metrizable iff $\Gb$ has countable cardinality.
In this case a distance $d$ compatible with the topology is defined in the following 
way. Let $\{V_n\}_{n\geq 0}$ be a sequence of finite subsets of $\Gb$ such that
$V_n\subsetneq V_{n+1}$ for all $n\geq 0$ and 
$\Gb=\bigcup_{n\geq 0}V_n$, so for configurations $x$ and $y$ in $\Ai^\Gb$
$$
d(x,y)=\begin{cases}
0, \text{ if $x=y$} \\
2^{-k}, \text{ if $x\neq y$ and $k=\max\{n\geq 0: x\vert_{V_n}=y\vert_{V_n}$\}}.
\end{cases}
$$
In any case ($\Gb$ countable or not), a neighborhood base for a configuration $x$ is
given by the family of cylinders $\{C(P_{x, \Lb}): \Lb\in [\Gb]^{<\infty}\}$
where $P_{x,\Lb}$ is the pattern $x\vert_{\Lb}$ and $[\Gb]^{<\infty}$ is
the collection of nonempty finite subsets of $\Gb$.

We note that the previous discussion about $\Ai^\Gb$ is
independent of any algebraic structure of $\Gb$, but the following is not.
For each $g\in\Gb$ it is defined 
the {\em $g$-shift} map $\sigma^g:\Ai^\Gb\to\Ai^\Gb$ by
\begin{equation}\label{gshift}
\sigma^g(x)(\ell)=x(g\ell)\,\text{ for all $x\in\Ai^\Gb$ and $\ell\in\Gb$},
\end{equation} 
where $g\ell$ means the (left)product of $g$ times $\ell$ in the
binary operation on $\Gb$. The following properties are direct:
$\sigma^e$ is the identity map, every $\sigma^g$ is continuous, bijective
with $(\sigma^g)^{-1}=\sigma^{g^{-1}}$, so every $g$-shift map is a homeomorphism;
additionally,  
$\sigma^g\circ\sigma^h=\sigma^{hg}$ for all $h,g\in\Gb$.

\begin{defi}\label{defhshift}
Given a subgroup $\mathbb{H}$ of a group $\mathbb{G}$,
a nonempty set $X\subset\Ai^\Gb$ is said to be an {\em $\Hb$-subshift}
(or {\em $\Hb$-shift space}) if it is a closed set in the prodiscrete topology 
of $\Ai^\Gb$ and it is $\Hb$-{\em shift invariant}; that is, 
$\sigma^g(X)=X$ for all $g\in\Hb$.
\end{defi}

We highlight that the $\{e\}$-subshifts are the closed subsets of  
$\Ai^\Gb$ and the $\Gb$-subshifts are the shift spaces introduced
in \cite{cecc}. In addition, $\Ai^\Gb$ is an $\mathbb{H}$-subshift 
for any subgroup $\mathbb{H}$ of $\mathbb{G}$.
We also observe that if $Y$ is an arbitrary set such that 
$\sigma^g(Y)\subset Y$ for all $g\in\mathbb{H}$, then the
group condition of $\mathbb{H}$ implies that $Y$ is $\mathbb{H}$-shift
invariant.

\begin{exa}\label{exa1}
Consider the alphabet $\Ai=\{0,1\}$ and the usual additive group $\Z$. The 
set $X\subset \Ai^\Z$ of all sequences $x=(x_n)_{n\in\Z}$
with $x_n=x_{n-1}$ for every even $n\in\Z$ is $2\Z$-subshift but is not 
a shift space.
\end{exa}

Now let us consider any $\Hb$-subshift $X\subset \Ai^\Gb$. Take any 
pattern $P:\Lb\to\Ai$ over $\Gb$ and the corresponding cylinder $C(P)$. 
If $C(P)\cap X\neq\emptyset$, then the
pattern $P$ is called {\em allowed}, otherwise it is a 
{\em forbidden pattern}. Clearly a pattern $P$ is allowed for $X$ iff there 
exists $x\in X$ such that $x\vert_{dom\,P}=P$; i.e. $P$ {\em appears} in $x$
($P\sqsubset x$).
The language $\Li(X)$ of the $\Hb$-subshift
$X$ is defined as the set of all allowed patterns. Observe that for 
every $\Lb\in [\Gb]^{<\infty}$ there are always allowed patterns with domain
$\Lb$; indeed, for every $x\in X$ and each $\Lb\in [\Gb]^{<\infty}$
the pattern $x\vert_\Lb$ is allowed.
It is also clear that $\Li(X)$
is the union of all $\Li_{\Lb}(X)$ where $\Lb$ runs over $[\Gb]^{<\infty}$ and
$\Li_\Lb(X)$ denotes the set of all allowed patterns for $X$ with domain
$\Lb$.

\begin{exa}\label{exa2}
Let $X$ be as in Example \ref{exa1}. The following patterns have 
same image set but a different nature in $X$, one of them is an
allowed pattern for $X$ and the other is not. Define
$P:\{0,1\}\to\Ai$ and $Q:\{-1,0\}\to \Ai$ by
$P(0)=1=Q(-1)$ and $P(1)=0=Q(0)$.
Observe that $P\sqsubset x$ where
$x\in\mathcal{A}^\mathbb{Z}$ is defined by
$x(n)=0$ for all $n\leq 0$ and $x(n)=1$ otherwise; also note that $x$ is an element of 
$X$ and so $P$ is an allowed pattern. Further, since for each $y\in X$
it holds that $y(-1)=y(0)$, $Q$ is a forbidden pattern for $X$.
This occurrence cannot happen in the classical setting of shift spaces.
\end{exa}

The following simple result proves, as for classic shift space, 
that the correspondence associating
to any $\Hb$-subshift its language is injective.

\begin{prop}
If $X,Y\subset\Ai^\Gb$ are both $\Hb$-subshifts, then $\Li(X)=\Li(Y)$ if and only if
$X=Y$.
\end{prop}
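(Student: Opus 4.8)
The plan is to prove the two implications separately, the reverse one being trivial. If $X=Y$ then obviously $\Li(X)=\Li(Y)$ since the language is defined purely in terms of the set. So the content is in the forward direction: assuming $\Li(X)=\Li(Y)$, I would show $X\subseteq Y$ and, by symmetry, $Y\subseteq X$, hence $X=Y$.

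First I would recall that both $X$ and $Y$ are closed subsets of $\Ai^\Gb$ in the prodiscrete topology, and that the cylinders $\{C(P_{x,\Lb}):\Lb\in[\Gb]^{<\infty}\}$ form a neighborhood base at each configuration $x$, as stated in the excerpt. The key step is then the following: take $x\in X$. For every finite $\Lb\subset\Gb$, the pattern $P_{x,\Lb}=x\vert_\Lb$ is allowed for $X$ (it appears in $x\in X$), so $P_{x,\Lb}\in\Li(X)=\Li(Y)$. By definition of allowed pattern for $Y$, this means $C(P_{x,\Lb})\cap Y\neq\emptyset$. Thus every basic neighborhood of $x$ meets $Y$, i.e. $x$ lies in the closure $\overline{Y}$. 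Since $Y$ is closed, $x\in Y$. This gives $X\subseteq Y$, and exchanging the roles of $X$ and $Y$ gives the reverse inclusion, completing the proof.

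I do not expect any genuine obstacle here — this is the direct analogue of the classical argument that a subshift equals $X_{\Li(X)^c}$, and the closedness hypothesis is exactly what makes it work. The only point requiring a little care is making explicit that membership in every cylinder $C(x\vert_\Lb)$ for all finite $\Lb$ forces a configuration into the closure of $Y$; this is immediate from the description of the neighborhood base given just before Definition \ref{defhshift}, and does not require $\Gb$ to be countable or $\Ai$ to be finite. Note also that $\Hb$-shift invariance plays no role in this particular statement — it holds for arbitrary closed sets — so the proof will not invoke the maps $\sigma^g$ at all.
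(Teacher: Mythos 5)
Your proof is correct and is essentially the paper's argument: both rest on the facts that the cylinders $C(x\vert_\Lb)$ form a neighborhood base at $x$, that each such pattern is allowed for $X$ and hence for $Y$, and that $Y$ is closed. The paper merely phrases this contrapositively (choosing a cylinder around a hypothetical $x\in X\setminus Y$ disjoint from $Y$ and deriving a contradiction), whereas you argue directly that $x\in\overline{Y}=Y$; the content is identical.
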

\begin{proof}
Suppose $\mathcal{L}(X)=\mathcal{L}(Y)$.
If there exists $x\in X\setminus Y$, then one can select an allowed
pattern for $X$, say $P$, such that $P$ appears in $x$ and $C(P)\cap Y=\emptyset$.
But $P$ also belongs to $\Li(Y)$, so 
$C(P)\cap Y\neq \emptyset$ and $X=Y$. It is clear that $X=Y$ implies 
$\mathcal{L}(X)=\mathcal{L}(Y)$.
\end{proof}

Next we list basic properties of the $\Hb$-subshift language; first,
we introduce a couple of definitions.

\begin{defi}\label{def1}
Let $X\subset\Ai^\Gb$ be an $\Hb$-subshift. Given $g\in\Hb$ and 
patterns $P$ and $Q$
over $\Gb$ and alphabet $\Ai$:
\begin{enumerate}[a)]
\item 
The {\em $g$-shifted pattern} $gP$ of the pattern $P$ is the pattern with domain
$g\Lb=\{g\ell:\ell\in \Lb\}$ ($\Lb= dom\,P$) and 
$gP(g\ell)=P(\ell)$ for all $\ell\in \Lb$.
\item
$Q$ is said to be a {\em subpattern} 
of $P$ if $P$ extends $Q$; that is, $dom\,Q\subset dom\,P$ and
$P\vert_{dom\,Q}=Q$, in symbols $P\subseteq Q$.
\end{enumerate}
\end{defi}

\begin{prop}\label{properties1}
Let $X\subset\Ai^\Gb$ be an $\Hb$-subshift. The language $\Li(X)$ of $X$ has the
following properties:
\begin{enumerate}[$L_1$)]
\item
Each subpattern of $P\in\Li(X)$ also belongs to $\Li(X)$.

\item
For every $Q\in\Li(X)$ and $g\in\Gb$ there exists 
$P\in\Li(X)$ such that $P\subseteq Q$ and $dom\,P=dom\,Q\cup \{g\}$ .

\item
For every $P\in\Li(X)$ and $g\in\Hb$, the pattern $gP\in\Li(X)$.
\end{enumerate}
\end{prop}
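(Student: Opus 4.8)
The plan is to reduce each of the three items to the defining property of the language: a pattern $P$ belongs to $\Li(X)$ exactly when some configuration in $X$ realizes it, i.e.\ there is $x\in X$ with $x\vert_{\dom P}=P$. For each of $L_1$, $L_2$, $L_3$ I would start from a configuration witnessing the hypothesis and manufacture one witnessing the conclusion.

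For $L_1$, fix $P\in\Li(X)$, a subpattern $Q$ of $P$, and some $x\in X$ with $x\vert_{\dom P}=P$. Since $\dom Q\subseteq\dom P$ and $P\vert_{\dom Q}=Q$, restricting once more yields $x\vert_{\dom Q}=(x\vert_{\dom P})\vert_{\dom Q}=Q$, so $Q\sqsubset x$ and $Q\in\Li(X)$. For $L_2$, given $Q\in\Li(X)$ and any $g\in\Gb$, pick $x\in X$ realizing $Q$ and set $P:=x\vert_{\dom Q\cup\{g\}}$; then $\dom P=\dom Q\cup\{g\}$, the pattern $P$ extends $Q$ because $P\vert_{\dom Q}=x\vert_{\dom Q}=Q$, and $P\sqsubset x\in X$, hence $P\in\Li(X)$. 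This argument uses nothing about $g$, which matches the statement ($L_2$ quantifies $g$ over all of $\Gb$, not merely over $\Hb$).

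The only item where the subgroup $\Hb$ genuinely enters is $L_3$, and it is the one whose bookkeeping needs care. Fix $P\in\Li(X)$, write $\Lb:=\dom P$, fix $g\in\Hb$, and choose $x\in X$ with $x\vert_\Lb=P$. The configuration that works is $y:=\sigma^{g^{-1}}(x)$: since $\Hb$ is a subgroup, $g^{-1}\in\Hb$, so $\Hb$-shift invariance gives $y\in\sigma^{g^{-1}}(X)=X$. It then remains to check $y\vert_{g\Lb}=gP$, and indeed, by \eqref{gshift}, for every $\ell\in\Lb$ we get $y(g\ell)=\sigma^{g^{-1}}(x)(g\ell)=x\big(g^{-1}(g\ell)\big)=x(\ell)=P(\ell)=gP(g\ell)$. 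Hence $gP\sqsubset y\in X$ and $gP\in\Li(X)$.

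I do not expect a serious obstacle; the proof is elementary. The one place to be vigilant is the direction of the shift in $L_3$: one must apply $\sigma^{g^{-1}}$ rather than $\sigma^g$ so that the block $P$ reappears precisely at the translated coordinates $g\Lb$, and this is also exactly where the hypotheses ``$g\in\Hb$'' and ``$\Hb$ closed under inverses'' are used (to ensure $\sigma^{g^{-1}}(X)=X$). No transfinite induction is needed here; that will only be required later, for the uncountable version of Theorem \ref{thm:t1}.
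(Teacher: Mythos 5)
Your proof is correct and follows essentially the same route as the paper: $L_1$ and $L_2$ by restricting/extending along a witnessing configuration, and $L_3$ via the observation that $P$ appears in $x\in X$ iff $gP$ appears in $\sigma^{g^{-1}}(x)\in X$, which is exactly the paper's one-line justification. Your version merely writes out the coordinate check $\sigma^{g^{-1}}(x)(g\ell)=x(\ell)=gP(g\ell)$ that the paper leaves implicit.
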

\begin{proof}
Properties $L_1$ (factorial property) and $L_2$ (extendable property) are immediate 
from the definition of allowed pattern. Actually the third one is also clear, 
just observe that for every $g\in\Hb$, a pattern $P$ appears in 
$x\in X$ if, and only if, the pattern $gP$ appears in 
$\sigma^{g^{-1}}(x)$.
\end{proof}

Property $L_3$ above is the {\em $\Hb$-shift invariance of patterns},
it implies that for every $g\in\Hb$ and $\Lb\in [\Gb]^{<\infty}$
the sets of allowed patterns $\Li_\Lb(X)$ and $\Li_{g\Lb}(X)$ have 
same cardinality. 
As in the classical context, see Theorem \ref{thm:t1} above, we would like to 
characterize the languages of shift spaces by sets of patterns with certain properties, 
more specifically with $L_1$, $L_2$ and $L_3$. However, we believe that this is 
not possible in the case that the group $\Gb$ is not countable. Below we  
add an additional property and use transfinite induction to obtain a
desired characterization.

Now we generalize three classical results in the $\Hb$-subshift setting. 
The first one characterizes this symbolic structure in terms of forbidden 
pattern sets; the second describes the compactness of 
$\Hb$-subshifts and the third one states a characterization of the language 
when $\Gb$ is countable. 

Given a collection $\Fi$ of patterns over $\Gb$ with alphabet $\Ai$ we
denote by $\domi_\Fi$ the set of all domains of patterns in $\Fi$.  

\begin{prop}
A subset $X$ of $\Ai^\Gb$ is an $\Hb$-subshift if, and only if, 
there exists  a collection $\Fi$ of patterns over $\Gb$ with alphabet $\Ai$
such that $X=X_\Fi^\mathbb{H}$, being that
\begin{equation}\label{forbi1}
X_\Fi^\mathbb{H}=\{x\in\Ai^\Gb: \text{for all $g\in\Hb$ and $\Lb\in\domi_\Fi$, 
$\sigma^g(x)\vert_\Lb\notin \Fi$}\}.
\end{equation}
\end{prop}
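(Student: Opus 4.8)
The plan is to prove the equivalence by establishing both implications, using the machinery already developed for languages of $\Hb$-subshifts.

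First I would prove the easier direction: any set of the form $X_\Fi^\Hb$ is an $\Hb$-subshift. Closedness follows because the complement is open: if $x\notin X_\Fi^\Hb$, then $\sigma^g(x)\vert_\Lb\in\Fi$ for some $g\in\Hb$ and some $\Lb\in\domi_\Fi$, and since membership in $\Fi$ only depends on finitely many coordinates of $x$ (namely those in $g\Lb$, which is finite because $\Lb$ is finite), a basic cylinder around $x$ avoids $X_\Fi^\Hb$. For $\Hb$-shift invariance, one checks $\sigma^h(X_\Fi^\Hb)\subseteq X_\Fi^\Hb$ for every $h\in\Hb$: if $x\in X_\Fi^\Hb$ and $h\in\Hb$, then for any $g\in\Hb$ and $\Lb\in\domi_\Fi$ we have $\sigma^g(\sigma^h(x))=\sigma^{hg}(x)$ using the composition rule $\sigma^g\circ\sigma^h=\sigma^{hg}$, and $hg\in\Hb$, so $\sigma^g(\sigma^h(x))\vert_\Lb=\sigma^{hg}(x)\vert_\Lb\notin\Fi$; thus $\sigma^h(x)\in X_\Fi^\Hb$. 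As already remarked in the text after Definition \ref{defhshift}, the subgroup condition then upgrades this inclusion to equality $\sigma^h(X_\Fi^\Hb)=X_\Fi^\Hb$. One should also note that $X_\Fi^\Hb$ may a priori be empty, so strictly this direction produces an $\Hb$-subshift only when $X_\Fi^\Hb\neq\emptyset$; I would handle this by remarking that the statement should be read accordingly, or by noting that for the $X$ we care about nonemptiness is automatic.

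For the converse, given an $\Hb$-subshift $X$, the natural candidate is the collection of all forbidden patterns, $\Fi=\Ai^\Gb$-patterns not in $\Li(X)$, i.e. $\Fi=\{P : P \text{ is a pattern over }\Gb,\ \Ai,\ P\notin\Li(X)\}$. I would then show $X=X_\Fi^\Hb$ by double inclusion. The inclusion $X\subseteq X_\Fi^\Hb$ is immediate: if $x\in X$, then for every $g\in\Hb$ we have $\sigma^g(x)\in X$ by shift invariance, hence every restriction $\sigma^g(x)\vert_\Lb$ is an allowed pattern, so lies in $\Li(X)$ and therefore not in $\Fi$. The reverse inclusion $X_\Fi^\Hb\subseteq X$ is where the closedness of $X$ enters and is the main point. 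Take $x\in X_\Fi^\Hb$; taking $g=e$ (so $\sigma^e=\mathrm{id}$), the defining condition gives that $x\vert_\Lb\notin\Fi$, i.e. $x\vert_\Lb\in\Li(X)$, for every finite $\Lb\subset\Gb$ that occurs as a domain in $\Fi$. Here I must be a little careful: I want this for \emph{every} finite $\Lb$, which holds because $\domi_\Fi$ contains every nonempty finite subset of $\Gb$ --- indeed, given any finite $\Lb$ there certainly exist patterns on $\Lb$ that are forbidden for $X$ (as soon as $X\neq\Ai^\Gb$; and if $X=\Ai^\Gb$ the result is trivial since then $X_\Fi^\Hb=\Ai^\Gb$ too), so $\Lb\in\domi_\Fi$. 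Consequently, for every finite $\Lb$, $x\vert_\Lb$ is allowed, so the cylinder $C(x\vert_\Lb)$ meets $X$. Since $\{C(x\vert_\Lb):\Lb\in[\Gb]^{<\infty}\}$ is a neighborhood base at $x$, every neighborhood of $x$ meets $X$, and as $X$ is closed we conclude $x\in X$.

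The step I expect to be the main obstacle is the bookkeeping around $\domi_\Fi$: the defining condition \eqref{forbi1} only quantifies over domains that actually appear in $\Fi$, so to run the closedness argument I need to ensure the chosen $\Fi$ has \emph{all} finite subsets of $\Gb$ as domains (handling the degenerate case $X=\Ai^\Gb$ separately), and I need $g\Lb$ finite to get genuine cylinder (open) sets --- both are true but worth stating explicitly. Everything else reduces to the composition identity $\sigma^g\circ\sigma^h=\sigma^{hg}$, the characterization of allowed patterns as those whose cylinder meets $X$, and the neighborhood-base description of the prodiscrete topology, all recorded above.
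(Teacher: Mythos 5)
Your proposal follows essentially the same route as the paper's proof: closedness of $X_\Fi^{\Hb}$ via a cylinder around any excluded configuration, $\Hb$-invariance via the composition rule $\sigma^g\circ\sigma^h=\sigma^{hg}$ upgraded to equality by the subgroup property, and, for the converse, the choice of $\Fi$ as the set of all forbidden patterns together with the neighborhood-base and closedness argument. There is, however, one incorrect claim in your bookkeeping: it is \emph{not} true that $\domi_\Fi$ contains every nonempty finite subset of $\Gb$ as soon as $X\neq\Ai^{\Gb}$. For example, for the golden mean shift in $\{0,1\}^{\Z}$ (forbidden word $11$) every pattern with a singleton domain is allowed, so no singleton belongs to $\domi_\Fi$ even though $X$ is a proper subshift. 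The conclusion you actually need --- that $x\vert_{\Lb}\in\Li(X)$ for \emph{every} finite $\Lb$ --- still holds, but by a dichotomy rather than by your claim: if $\Lb\in\domi_\Fi$, then the defining condition \eqref{forbi1} with $g=e$ gives $x\vert_{\Lb}\notin\Fi$, hence $x\vert_{\Lb}$ is allowed; and if $\Lb\notin\domi_\Fi$, then no pattern with domain $\Lb$ is forbidden at all, so $x\vert_{\Lb}$ is allowed trivially. (The paper packages this as a one-line contradiction: if $x\vert_{\Lb}$ were forbidden, then $x\vert_{\Lb}$ itself would witness $\Lb\in\domi_\Fi$, and \eqref{forbi1} would then force $x\vert_{\Lb}\notin\Fi$.) With that repair your argument is complete; your caveat about the possible emptiness of $X_\Fi^{\Hb}$ is a legitimate observation that the paper leaves implicit.
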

\begin{proof}
First we show that $X_\Fi^\mathbb{H}$ is an $\Hb$-subshift. 
To simplify the notation, we set $Y=X_{\mathcal{F}}^\mathbb{H}$.
Take $x\notin Y$, so
there are $g\in\Hb$ and $P\in\Fi$, say $P:\Lb\to\Ai$, such that 
$\sigma^g(x)\vert_\Lb=P$.
Hence the open set $\sigma^{g^{-1}}(C(P))$, that is the cylinder $C(gP)$,
contains $x$ and it is disjoint from 
$Y$; consequently $Y$ is a closed subset of $\Ai^\Gb$.
On the other hand, let us take arbitrary $x\in Y$ and $g\in\Hb$. Thus, 
whatever $\Lb\in\domi_\Fi$ and $\ell\in\Hb$ are, 
as $\sigma^\ell(\sigma^g(x))=\sigma^{g\ell}(x)$ and $g\ell\in\Hb$, 
it follows that
$\sigma^\ell(\sigma^g(x))\vert_\Lb\notin\Fi$ and so $\sigma^g(x)\in Y$.
Hence $\sigma^g(Y)\subset Y$ for all $g\in\Hb$; and since 
$\Hb$ is actually a group, $\sigma^g(Y)=Y$ for all $g\in\Hb$. Therefore $Y$ is
an $\Hb$-subshift.

Now suppose that $X$ is an $\mathbb{H}$-subshift. 
Let $\Fi$ be the set of all forbidden patterns for $X$; that is, $P\in\Fi$ iff 
$P\notin\Li(X)$. We will show that $X=X_\Fi^\mathbb{H}$. Since for each $x\in X$ and 
$g\in\Hb$ the configuration $\sigma^g(x)$ belongs to $X$, it follows that
for every $\Lb\in [\Gb]^{<\infty}$ the pattern $\sigma^g(x)\vert_\Lb$ is allowed; 
this implies the inclusion $X\subset X_\Fi^\mathbb{H}$. 
Now take $x\in X_\Fi^\mathbb{H}$, we claim that any pattern
$P$ with $P\sqsubset x$ is allowed for $X$. In fact, if $P:\Lb\to\Ai$ is
a forbidden pattern for $X$ with $x\in C(P)$, then $\Lb\in\domi_\Fi$ and
from \eqref{forbi1} it holds that $x\vert_\Lb\in\Li(X)$; but $x\vert_\Lb=P$. 
This proves the claim. As $x$ is a cluster point of the closed space $X$, 
we deduce that $x\in X$. Thus $X_\mathcal{F}^\mathbb{H}\subset X$,
and the proof is complete.
\end{proof}

\begin{prop}
An $\Hb$-subshift $X\subset\Ai^\Gb$ is compact if, and only if, 
$\Li_\Lb(X)$ is a finite set for every singleton subset $\Lb$ of $\Gb$.
\end{prop}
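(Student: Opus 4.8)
The plan is to reduce the statement to a condition on the ranges of the coordinate projections, and then to invoke Tychonoff's theorem. For each $g\in\Gb$ I would write $\pi_g\colon\Ai^\Gb\to\Ai$ for the coordinate projection $\pi_g(x)=x(g)$; it is continuous since $\Ai$ is discrete and $\pi_g^{-1}(\{a\})$ is exactly the cylinder $C(P)$ determined by the pattern $P\colon\{g\}\to\Ai$ with $P(g)=a$. Putting $A_g=\pi_g(X)\subset\Ai$, the basic remark, useful in both directions, is that a pattern $P$ with $\dom P=\{g\}$ is allowed for $X$ precisely when $P(g)\in A_g$; thus $P\mapsto P(g)$ is a bijection from $\Li_{\{g\}}(X)$ onto $A_g$, so $\Li_{\{g\}}(X)$ is finite if and only if $A_g$ is finite.

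For the implication ``$X$ compact $\Rightarrow$ $\Li_{\{g\}}(X)$ finite'', I would argue that if $X$ is compact then each $A_g=\pi_g(X)$ is a compact subset of the discrete space $\Ai$, hence finite, and then $\Li_{\{g\}}(X)$ is finite by the remark above.

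For the converse, assuming that every $\Li_{\{g\}}(X)$ is finite, each $A_g$ is a finite, hence compact, subspace of $\Ai$. Since $x(g)\in A_g$ for every $x\in X$ and every $g\in\Gb$, one has $X\subset\prod_{g\in\Gb}A_g$. By Tychonoff's theorem this product is compact, and its product topology coincides with the topology it inherits as a subspace of $\Ai^\Gb$. As $X$ is closed in $\Ai^\Gb$, it is in particular closed in $\prod_{g\in\Gb}A_g$, and therefore compact, being a closed subset of a compact space.

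The argument is routine once the reduction to the projections $\pi_g$ is in place; the only slightly delicate point is the identification of the product topology on $\prod_{g\in\Gb}A_g$ with the subspace topology inherited from $\Ai^\Gb$, which is the usual compatibility of products with subspaces and is what lets compactness of the product pass to the closed set $X$. I note in passing that $\Hb$-shift invariance is not used here: the statement and proof hold verbatim for any nonempty closed subset of $\Ai^\Gb$.
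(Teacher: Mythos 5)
Your proof is correct, and its skeleton is the same as the paper's: both directions reduce to the coordinate projections $\pi_g$, the forward implication uses that a continuous image of a compact set in a discrete space is finite, and the converse embeds $X$ in the product $\prod_{g\in\Gb}A_g$ of the (finite) coordinate ranges and applies Tychonoff. Where you genuinely diverge is in how you get compactness of $X$ from compactness of that product. You observe that the product topology on $\prod_{g\in\Gb}A_g$ is the subspace topology inherited from $\Ai^\Gb$, so closedness of $X$ in $\Ai^\Gb$ immediately makes $X$ a closed subset of a compact space; this is clean, and it justifies your closing remark that $\Hb$-shift invariance plays no role. The paper instead argues by contradiction that $X$ is closed in $Y=\prod_{g\in\Gb}\Ai_g$: it takes a point $x\in Y\setminus X$ in the closure of $X$, notes that all its finite restrictions are allowed patterns, and then contradicts the representation $X=X_\Fi^{\Hb}$ using property $L_3$ (shift invariance of the language). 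That route does invoke the shift structure, but only because it re-derives closedness through the forbidden-pattern characterization rather than quoting the compatibility of products with subspaces. Your version is shorter and isolates the actual hypothesis needed (closedness), at the cost of leaning on the standard topological fact you correctly flag as the one delicate point; the paper's version stays entirely inside its own machinery of cylinders and forbidden patterns. One can also note that the paper explicitly dismisses the case $\Ai$ finite at the outset, which your argument handles uniformly without a case split.
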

\begin{proof}
When $\Ai$ is finite there is nothing to prove, so assume
that $\Ai$ is not finite.
If $X$ is compact, the finiteness of $\Li_\Lb(X)$ follows from
the continuity of each natural projections from $X$ to $\Ai$;
recall that $\mathcal{A}$ is a discrete topological space. 
Conversely, assume that $\Li_\Lb(X)$ is finite for all 
singleton subset $\Lb$ of $\Gb$.
Define $\Ai_g=\{P(g): P\in\Li_{\{g\}}(X)\}$ for every $g\in\Gb$. Clearly
$\Ai_g$ is finite, the prodiscrete topology of 
$Y=\prod_{g\in\Gb}\Ai_g$ is compact and $X\subset Y$. 
Also note that if $X$ is a compact set in $Y$, then from the continuity of 
the inclusion map from $Y$ to $\mathcal{A}^\mathbb{G}$ the compactness of 
$X$ in $\mathcal{A}^\mathbb{G}$ follows. Now we will show that $X$ is a closed set
of $Y$, and so it is a compact set in $Y$.
Suppose that $X$ is not closed in $Y$; that is, there exists 
$x\in Y\setminus X$ such that for every cylinder $C(P)$ with $x\in C(P)$ 
it holds that $Y\cap C(P)\cap X\neq \emptyset$; therefore
$C(P)\cap X\neq \emptyset$ and $P\in\Li(X)$. In particular we have that
$x\vert_{\Lb}\in\Li(X)$ for all $\Lb\in [\Gb]^{<\infty}$. 
On the other hand, 
since $x\notin X$, there
are $g\in\Hb$ and $\Lb\in [\Gb]^{<\infty}$ such that 
$\sigma^g(x)\vert_{\Lb}\notin \Li(X)$, recall that $X=X_\Fi^\mathbb{H}$ 
where $\Fi$ is the set of all patterns outside $\mathcal{L}(X)$.
However, the pattern $P=x\vert_{g\Lb}$ is in $\Li(X)$ and therefore
$g^{-1}P=\sigma^g(x)\vert_{\Lb}$ is also in $\Li(X)$.
\end{proof}

We would like to emphasize that the above result does not mean 
that an $\mathbb{H}$-subshift of $\mathcal{A}^\mathbb{G}$ is a 
subset of $\mathcal{B}^\mathbb{G}$ for some finite subset 
$\mathcal{B}$ of $\mathcal{A}$, 
the following example is a case in point.
\begin{exa}\label{compactexample}
We consider as alphabet $\mathcal{A}$ the set of non-zero real numbers.
Let $\mathbb{G}$ be the general linear group
of degree $n\geq 2$ over $\mathbb{R}$, this is the set of all $n\times n$ 
invertible matrices over $\mathbb{R}$ endowed with the standard matrix
multiplication; as subgroup $\mathbb{H}$ of $\mathbb{G}$ we consider 
the special linear group: the set of all matrices in
$\mathbb{G}$ with determinant $1$. Now let $X$ be the singleton subset
of $\mathcal{A}^\mathbb{G}$ whose unique element is the
function $x:\mathbb{G}\to\mathcal{A}$ given by
$x(A)=det\,A$ (determinant of $A$) for all $A\in\mathbb{G}$. Note that $X$ is
a closed subset of $\mathcal{A}^\mathbb{G}$, and for arbitrary 
$A\in\mathbb{G}$ and $B\in\mathbb{H}$ it holds that
$$
\sigma^B(x)(A)=x(BA)=det\,BA=det\,A=x(A),
$$
that is $\sigma^B(X)=X$ for all $B\in\mathbb{H}$, thus $X$ is
an $\mathbb{H}$-subshift and every symbol of $\mathcal{A}$ appears in $X$.
Clearly $X$ is a compact subset of $\mathcal{A}^\mathbb{G}$.
\end{exa}

\begin{prop}\label{countable}
Let $\Gb$ be a countable group and $\Hb$ be a subgroup of $\Gb$. 
If $\Li$ is a collection of patterns over
$\Gb$ and alphabet $\Ai$ satisfying properties $L_1, L_2$ and $L_3$, then
there exists an $\Hb$-subshift such that $\Li(X)=\Li$.
\end{prop}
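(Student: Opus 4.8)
The plan is to imitate the classical proof of Theorem~\ref{thm:t1}: use as candidate the $\Hb$-subshift cut out by the patterns that are \emph{not} in $\Li$. Concretely, I would let $\Fi$ be the collection of all patterns over $\Gb$ with alphabet $\Ai$ that do not belong to $\Li$, and set $X=X_\Fi^{\Hb}$ as in \eqref{forbi1}. Once $X\neq\emptyset$ is established, the previous proposition guarantees that $X$ is an $\Hb$-subshift, and the whole task reduces to proving the two inclusions $\Li(X)\subseteq\Li$ and $\Li\subseteq\Li(X)$.

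The inclusion $\Li(X)\subseteq\Li$ is routine; I would dispose of it first. Given $P\in\Li(X)$, pick $x\in X$ with $x\vert_{\dom P}=P$. If $\dom P\in\domi_\Fi$, then putting $g=e$ in \eqref{forbi1} forces $P=\sigma^e(x)\vert_{\dom P}\notin\Fi$, that is $P\in\Li$; and if $\dom P\notin\domi_\Fi$, then by definition of $\domi_\Fi$ no pattern with that domain lies in $\Fi$, so again $P\in\Li$.

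The core is the inclusion $\Li\subseteq\Li(X)$, and this is where the countability of $\Gb$ enters. Fix $Q\in\Li$; I would produce a configuration $x\in X$ with $Q\sqsubset x$ as follows. Enumerate $\Gb=\{g_n:n\geq 0\}$ and build recursively an increasing chain of patterns $Q=P_0\subseteq P_1\subseteq P_2\subseteq\cdots$, all lying in $\Li$, with $\{g_0,\dots,g_n\}\subseteq\dom P_n$ for each $n$: at stage $n$, keep $P_n=P_{n-1}$ if $g_n\in\dom P_{n-1}$, and otherwise apply property $L_2$ to the pattern $P_{n-1}$ and the element $g_n$. Since $\Gb$ is countable these domains exhaust $\Gb$, so $x:=\bigcup_{n\geq 0}P_n$ is a well-defined configuration that extends $Q$. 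To check $x\in X_\Fi^{\Hb}$, fix $h\in\Hb$ and a finite $\Lb\subseteq\Gb$; unwinding \eqref{gshift} and Definition~\ref{def1}(a) one gets the identity $\sigma^h(x)\vert_\Lb=h^{-1}(x\vert_{h\Lb})$. As $h\Lb$ is finite it is contained in some $\dom P_n$, so $x\vert_{h\Lb}$ is a subpattern of $P_n\in\Li$ and hence $x\vert_{h\Lb}\in\Li$ by $L_1$; then property $L_3$, applied with $h^{-1}\in\Hb$, gives $h^{-1}(x\vert_{h\Lb})\in\Li$, i.e.\ $\sigma^h(x)\vert_\Lb\notin\Fi$. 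Therefore $x\in X$ (in particular $X\neq\emptyset$), and since $x\vert_{\dom Q}=Q$ we conclude $Q\in\Li(X)$.

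I do not expect any single step to be genuinely difficult; the delicate points are only the bookkeeping in the recursion and the correct use of $L_1$ and $L_3$ to absorb the $\Hb$-shift when verifying $x\in X_\Fi^{\Hb}$ (in the classical case $\Gb=\Hb=\Z$ this is invisible, since words carry no positional information). What does deserve emphasis is \emph{where} countability is indispensable: the recursion constructs $x$ in $\omega$ steps, one coordinate at a time, and over an uncountable well-ordering of $\Gb$ it would break down at limit stages, where one would be forced to extend a pattern whose domain is already infinite --- which $L_2$ does not allow. This is exactly the obstruction that motivates the extended patterns introduced in the next section.
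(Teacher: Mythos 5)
Your proposal is correct and follows essentially the same route as the paper: take $\Fi$ to be the patterns outside $\Li$, set $X=X_\Fi^{\Hb}$, get $\Li(X)\subseteq\Li$ directly from \eqref{forbi1}, and for the reverse inclusion extend a given $P\in\Li$ one group element at a time along an enumeration of $\Gb$ via $L_2$, then verify membership in $X$ using $L_1$ and $L_3$ exactly as you describe. The only (cosmetic) difference is that you start the recursion at $P_0=Q$ and skip already-covered elements, whereas the paper first picks $n$ with $\dom P\subseteq\{g_0,\dots,g_n\}$; your closing remark about why the argument breaks at limit stages is precisely the motivation the paper gives for Theorem~\ref{principal}.
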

\begin{proof}
Let us assume that $\mathbb{G}=\{g_0,g_1,\cdots\}$. 
Let $\Fi$ be the set of all patterns over $\mathbb{G}$ and 
alphabet $\mathcal{A}$ outside $\mathcal{L}$ and 
let $X=X_\mathcal{F}^\mathbb{H}$, we will prove that
$\mathcal{L}(X)=\mathcal{L}$. Let $P:\mathbb{L}\to\mathcal{A}$ be a 
pattern in $\mathcal{L}(X)$ and $x$ be a configuration in $X$ 
such that $P\sqsubset x$. From definition of $X_\mathcal{F}^\mathbb{H}$
one obtains, in particular, that $x\vert_{\mathbb{L}}=P\notin\mathcal{F}$;
in other words $P\in\mathcal{L}$ and thus $\mathcal{L}(X)\subset\mathcal{L}$.
Note that this inclusion does not dependent from the cardinality of
$\mathbb{G}$.

Now we take $P:\mathbb{L}\to\mathcal{A}$ in $\mathcal{L}$. Let $n$ an
integer such that $\{g_0,g_1,\cdots,g_n\}$ contains $\mathbb{L}$.
From property $L_2)$ we select $P_n:\{g_0,g_1,\cdots,g_n\}\to\mathcal{A}$ 
in $\mathcal{L}$ so that $P\subseteq P_n$. Indeed, with a recursive 
process on the natural numbers, for each $k\geq 1$, we choose 
$P_{n+k}:\{g_0,g_1,\cdots,g_{n+k}\}\to\mathcal{A}$ in $\mathcal{L}$
such that $P_{n+j}\subseteq P_{n+k}$ for all
$0\leq j<k$. Next we define $x:\mathbb{G}\to\mathcal{A}$ by
$x(g)=P_{n+k}(g)$ if $g\in dom\,P_{n+k}$. Note that 
$P\sqsubset x$, so to show that $P\in\mathcal{L}(X)$ we will check
that $x\in X$. Take any $g\in\mathbb{G}$ and 
$\mathbb{M}\in dom_{\mathcal{F}}$. Since $g\mathbb{M}$ is a finite 
subset of $\mathbb{G}$ we take $k\geq 1$ large enough so that
$g\mathbb{M}\subset \{g_0,g_1,\cdots,g_{n+k}\}$. By definition,
$R=x\vert_{g\mathbb{M}}$ is a subpattern of $P_{n+k}$, therefore
$R\in\mathcal{L}$ (property $L_1)$). On the other hand, as 
$\sigma^g(x)\vert_{\mathbb{M}}$ is the $g^{-1}$-shifted pattern of
$R$, property $L_3)$ implies that 
$\sigma^g(x)\vert_{\mathbb{M}}\in \mathcal{L}$ and 
consequently $x\in X$.
\end{proof}

Observe that the preceding proposition is just a paraphrasing of
Theorem \ref{thm:t1}, to obtain a version of this result when the group
$\Gb$ is uncountable one
requires additional tools and hypothesis. To trace a route in that direction 
we begin by considering a broader class of patterns over $\Gb$ and 
alphabet $\Ai$. 

\begin{defi}\label{extpatt}
A function $P:\Lb\to\Ai$ 
is said to be an {\em extended pattern over $\Gb$ and alphabet
$\Ai$} ({\em extended pattern,} for short) if
$\mathbb{L}\subset\mathbb{G}$ and 
$\mathbb{G}\setminus\mathbb{L}$ is not finite.
\end{defi}

Note that every pattern over $\Gb$ and alphabet $\Ai$ is also
an extended one. If $X\subset \Ai^\Gb$ is an
$\Hb$-subshift, an extended pattern $P:\Lb\to\Ai$ is said to be 
{\em allowed} for $X$ if
there exists $x\in X$ such that $x\vert_{\Lb}=P$; as before one says
that $P$ appears at $x$ ($P\sqsubset x$). The set $\Li_\infty(X)$ 
of all allowed extended patterns for $X$ is the {\em extended language} of $X$.
Observe that $\Li(X)\subset \Li_\infty(X)$ and 
$P:\Lb\to\Ai \in\Li_\infty(X)$ iff $P\sqsubset x$ for some $x\in X$. 
For extended patterns, the notions of subpattern and $g$-shifted pattern (see
Definition \ref{def1}) are analogous.
It is also straightforward to see that the properties stated in 
Proposition \ref{properties1} also hold for the extended language. 

In order to reach our goal when
$\Gb$ is not countable, we need a well-order on $\Gb$ to use the principle
of transfinite induction. We recall some necessary facts and properties
about the notions of well-ordered sets and transfinite induction; for 
details see for example \cite{diprisco} or \cite{halmos}. 
Let $A$ be a nonempty set, a well-order on $A$ is a partial order,
say $<$, such that every nonempty subset of $A$ has a smallest element; 
in particular, $<$ is a total order (any
two elements are comparable) and each element $a$ in the well-ordered
set $(A,<)$, except a possible greatest one, has a unique immediate successor,
namely,
the smallest element of the subset $\{s\in A: a<s\}$. For every
$a\in A$, the {\em initial segment} determined by $a$ is the set
$s(a)=\{s\in A: s<a\}$; the corresponding {\em weak initial segment}
is $\overline{s}(a)=s(a)\cup\{a\}$. It is clear that if $a_0$ is the first element
of $A$, then $s(a_0)=\emptyset$; additionally, if $a_1$ is the first element
of $A\setminus\{a_0\}$, then $s(a_1)=\{a_0\}$ and so on. The collection
of all initial segments in $(A,<)$ is well-ordered by the inclusion $\subset$;
indeed, $s(a)\subset s(b)$ iff $a<b$. It is known that every well-ordered set
$(A,<)$ is identified (via a unique order isomorphism) with a ordinal number; 
moreover, the position of each element in $(A,<)$ is also given by a unique
ordinal number. So, every element of $(A,<)$ is either the zero, a successor
ordinal or a limit ordinal. In other words, given $a\in A$, it is either
$a_0$, it is a successor or for each $b\in A$ with $b<a$ there is $c\in A$ 
such that $b<c<a$.

Associated to well-order relations there is an important mathematical statement that
is equivalent to axiom of choice, that statement (also called Zermelo's theorem)
is the following:

\smallskip
\noindent
{\bf Well-ordering theorem.}
{\em Every nonempty set can be well-ordered.}

\smallskip

Some facts about well-ordered sets are well known (for details see 
for example \cite{diprisco} or \cite{enderton}):
\begin{enumerate}[a)]
\item 
Every well-ordered set $(A,<)$ is order isomorphic to a unique 
ordinal number $\alpha$; that is, $A$ is well-ordering in order type $\alpha$.
\item 
If $A$ is an infinite set and it is well-ordering in order type $|A|$ 
(the cardinality of $A$), then the complement of any proper initial 
segment in $A$ is not a finite set.
\end{enumerate}

The {\em principle of transfinite induction} is the 
statement that extends the principle of mathematical induction
to well orders of order types larger that the first infinite ordinal.
A version of it, written in the symbology of set theory, is:

\smallskip
\noindent
{\bf Principle of transfinite induction.}
{\em If $(A,<)$ is a well-ordered set and $\phi$ is a formula in the language of
set theory, then}
$$
(\forall u\in A[\forall v\in A(v<u \rightarrow \phi(v))\rightarrow \phi(u)])
\rightarrow \forall u\in A \phi(u).
$$

After that digression we return to our framework, it is constituted
by: an uncountable group $\mathbb{G}$ which is well-ordering in order type
$|\mathbb{G}|$, a subgroup $\mathbb{H}$ of $\Gb$, an alphabet 
$\mathcal{A}$ (a discrete topological space) and the Cartesian product
$\Ai^\Gb$ endowed with the 
prodiscrete topology. 
\begin{defi}\label{nested}
Given a set $\mathcal{L}_\infty$ of extended patterns over $\mathbb{G}$ 
and alphabet $\mathcal{A}$, an {\em increasing well-ordered $\subseteq$-chain} in
$\mathcal{L}_\infty$ is a map $\mathcal{R}:\Lambda\to\mathcal{L}_\infty$
such that: $(\Lambda,<)$ is a well-ordered set and for all 
$\alpha,\beta\in\Lambda$ with $\alpha<\beta$ it holds  
$\mathcal{R}(\alpha)\subseteq \mathcal{R}(\beta)$.
An increasing well-ordering $\subseteq$-chain $\mathcal{R}:\Lambda\to\mathcal{L}_\infty$
is called {\em proper} if 
$\mathbb{G}\setminus \bigcup_{\lambda\in\Lambda}
dom\,\mathcal{R}(\lambda)$ is infinite.
\end{defi}

Note that for every increasing well-ordered $\subseteq$-chain 
$\mathcal{R}:\Lambda\to\mathcal{L}_\infty$
its union
$$
\left(\bigcup\nolimits_{\alpha\in\Lambda}\Ri_\alpha\right)(g)=\Ri_\beta(g),
\,\text{if $g\in\dom \Ri_\beta$},
$$
makes sense; here $\mathcal{R}_\alpha=\mathcal{R}(\alpha)$, 
whatever $\alpha\in\Lambda$.  

As last ingredient to state and prove an uncountable version of
Theorem \ref{thm:t1}, we say that a set $\mathcal{L}_\infty$ of 
extended patterns satisfies the $L_4)$ property 
({\em invariance of increasing well-ordered $\subseteq$-chains}) if the union
of any proper increasing well-ordered $\subseteq$-chains in $\mathcal{L}_\infty$ also
belongs to $\mathcal{L}_\infty$.

\begin{thm}\label{principal}
Let $\mathcal{L}_\infty$ be a collection of extended patterns over 
$\mathbb{G}$ with alphabet $\mathcal{A}$ and satisfying each
property $L_i)$, $i=1,2,3,4$. If
$\mathcal{L}$ is the set of all patterns in $\mathcal{L}_\infty$
and $\mathbb{H}$ is a subgroup of $\mathbb{G}$,
then there exists an 
$\mathbb{H}$-subshift $X\subset\mathcal{A}^\mathbb{G}$ with 
$\mathcal{L}(X)=\mathcal{L}$.
\end{thm}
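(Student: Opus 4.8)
The plan is to mimic the proof of Proposition~\ref{countable} (the countable case), replacing the recursive construction over $\mathbb{N}$ by a transfinite recursion along a well-order of $\mathbb{G}$, and using property $L_4)$ precisely at limit stages. As before, set $\Fi$ to be the collection of all patterns outside $\mathcal{L}$, put $X=X_\Fi^{\mathbb{H}}$, and note that $X$ is an $\mathbb{H}$-subshift by the forbidden-pattern characterization. The inclusion $\mathcal{L}(X)\subseteq\mathcal{L}$ is immediate and independent of cardinality: if $P\sqsubset x$ for some $x\in X$, then $x\vert_{\dom P}=P\notin\Fi$, so $P\in\mathcal{L}$. The real work is the reverse inclusion $\mathcal{L}\subseteq\mathcal{L}(X)$, i.e.\ extending a given pattern $P\in\mathcal{L}$ to a full configuration $x\in X$ with $P\sqsubset x$.

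\smallskip

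To do this, fix $P:\mathbb{L}\to\mathcal{A}$ in $\mathcal{L}$ and enumerate $\mathbb{G}=\{g_\alpha:\alpha<|\mathbb{G}|\}$ via the given well-order in order type $|\mathbb{G}|$. I would define by transfinite recursion an increasing well-ordered $\subseteq$-chain $\mathcal{R}:\Lambda\to\mathcal{L}_\infty$ of patterns in $\mathcal{L}$, where $\Lambda$ indexes the stages, so that (i) $\mathcal{R}(0)$ extends $P$ and has domain $\mathbb{L}\cup\{g_0\}$ (using $L_2)$); (ii) at a successor stage $\alpha+1$, given $\mathcal{R}(\alpha)$, I use $L_2)$ to pick $\mathcal{R}(\alpha+1)\in\mathcal{L}$ with $\mathcal{R}(\alpha)\subseteq\mathcal{R}(\alpha+1)$ and $\dom\mathcal{R}(\alpha+1)=\dom\mathcal{R}(\alpha)\cup\{g_{\alpha+1}\}$; (iii) at a limit stage $\lambda$, I take $\mathcal{R}(\lambda)=\bigcup_{\alpha<\lambda}\mathcal{R}(\alpha)$. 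The key point for step (iii) is that the restricted chain $(\mathcal{R}(\alpha))_{\alpha<\lambda}$ is \emph{proper}: its union has domain contained in the proper initial segment $\{g_\alpha:\alpha<\lambda\}$ of $\mathbb{G}$ (more precisely in that segment together with $\mathbb{L}$, still a proper initial-segment-type set whose complement is infinite by fact~b) about well-ordered sets, since $\mathbb{L}$ has finite, hence bounded, reach one must be slightly careful---take the union of $\mathbb{L}$ with an initial segment, whose complement is still infinite because $|\mathbb{G}|$ is uncountable). Hence by $L_4)$ the union $\mathcal{R}(\lambda)$ lies in $\mathcal{L}_\infty$, and it is in $\mathcal{L}$ exactly when its domain is finite; since at limit stages the domain is infinite, $\mathcal{R}(\lambda)\in\mathcal{L}_\infty$ is all we can and need to claim. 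At the very last stage (if $|\mathbb{G}|$ is a successor ordinal) or in the limit, the union over all of $\Lambda$ has domain all of $\mathbb{G}$, and I call this configuration $x$.

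\smallskip

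Then $P\sqsubset x$ by construction. To finish I must show $x\in X=X_\Fi^{\mathbb{H}}$: for every $g\in\mathbb{H}$ and every $\mathbb{M}\in\domi_\Fi$, one needs $\sigma^g(x)\vert_{\mathbb{M}}\notin\Fi$, i.e.\ $\sigma^g(x)\vert_{\mathbb{M}}\in\mathcal{L}$. Since $\mathbb{M}$ is finite, so is $g\mathbb{M}$, hence $g\mathbb{M}$ is contained in $\dom\mathcal{R}(\alpha)$ for some stage $\alpha$ large enough (using that the domains exhaust $\mathbb{G}$ and are increasing). Then $R:=x\vert_{g\mathbb{M}}$ is a subpattern of $\mathcal{R}(\alpha)\in\mathcal{L}$, so $R\in\mathcal{L}$ by $L_1)$, and since $\sigma^g(x)\vert_{\mathbb{M}}$ is the $g^{-1}$-shifted pattern $g^{-1}R$ of $R$, property $L_3)$ gives $\sigma^g(x)\vert_{\mathbb{M}}\in\mathcal{L}$, as required. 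This shows $x\in X$ and hence $P\in\mathcal{L}(X)$, completing the argument.

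\smallskip

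I expect the main obstacle---and the place where the hypotheses are doing genuine work compared with the countable case---to be the bookkeeping at limit stages: one must verify that every restricted chain arising in the recursion is \emph{proper} in the sense of Definition~\ref{nested}, so that $L_4)$ actually applies. This is where fact~b) about well-ordered sets in order type $|\mathbb{G}|$ is essential (the complement of any proper initial segment is infinite), together with the observation that adjoining the finite set $\mathbb{L}$ to such a segment does not spoil infiniteness of the complement. A secondary technical point is to phrase the transfinite recursion cleanly (e.g.\ indexing stages by ordinals $\le|\mathbb{G}|$ and treating zero/successor/limit cases), and to confirm that the recursion can always proceed---successor steps never get stuck because $L_2)$ unconditionally provides one-point extensions, and limit steps never get stuck because of $L_4)$.
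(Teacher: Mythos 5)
Your proposal is correct and follows essentially the same route as the paper: the same $X=X_{\mathcal{F}}^{\mathbb{H}}$ with $\mathcal{F}$ the patterns outside $\mathcal{L}$, a transfinite construction that applies $L_2)$ at successor stages and $L_4)$ to proper increasing well-ordered $\subseteq$-chains at limit stages (justified by the order type $|\mathbb{G}|$ making complements of proper initial segments infinite), and the same final verification via $L_1)$ and $L_3)$. The only differences are presentational — the paper phrases the construction as transfinite induction on a formula $\phi$ over $(\mathbb{G}\setminus\mathbb{L},<)$ rather than an explicit recursion over an enumeration of $\mathbb{G}$, and you should say $\mathcal{R}(\alpha)\in\mathcal{L}_\infty$ (not $\mathcal{L}$) for stages $\alpha\geq\omega$, which does not affect the argument since $L_1)$ holds for $\mathcal{L}_\infty$.
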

\begin{proof}
As in the proof of Proposition \ref{countable} we consider: the set
$\mathcal{F}$ of all patterns over $\mathbb{G}$ and alphabet
$\mathcal{A}$ outside $\mathcal{L}$ and $X=X^\mathbb{H}_\mathcal{F}$.
As in the case $\mathbb{G}$ countable we have $\mathcal{L}(X)\subset \mathcal{L}$,
see Proposition \ref{countable}.
Take an arbitrary $P:\mathbb{L}\to\mathcal{A}$ in $\mathcal{L}$,
proving that $P\in\mathcal{L}(X)$ is equivalent to show that there is 
$x\in X$ such that $P\sqsubset x$. In Proposition \ref{countable} 
we proceeded by extending $P$ by finite steps and used the principle of 
mathematical induction to obtain such a 
configuration $x$; this procedure does not work when 
$\mathbb{G}$ is uncountable. The way to extend $P$ begins by considering 
in $\mathbb{G}\setminus\mathbb{L}$ the well-order inherited from
the well-order $<$ considered in $\mathbb{G}$, recall that it is in order 
type $|\mathbb{G}|$. Let $g_0$ be the first element of 
$(\mathbb{G}\setminus \mathbb{L},<)$. For each $n\geq 1$ we denote by 
$g_n\in \mathbb{G}\setminus \mathbb{L}$ the immediate successor of
$g_{n-1}$ and by $g_\omega$ the first element in 
$(\mathbb{G}\setminus \mathbb{L},<)$ that is not successor of any other,
it is its first limit ordinal; or what is the same:
the first element of $\mathbb{G}\setminus 
(\mathbb{L}\cup \bigcup_{n\geq 1}s(g_{n}))$;
note that $s(g_\omega)=\{g_n: n\geq 0\}$.
As in the countable case, from property $L_2)$, it is obtained
a sequence $(P^{g_n})_{n\geq 0}$ of patterns
$P^{g_n}:\Lb\cup s(g_n)\to\Ai$ in $\mathcal{L}$ such that 
$P^{g_0}=P$ and $P^{g_{n-1}}\subseteq P^{g_n}$ for every $n\geq 1$.
The sequence $(P^{g_n})_{n\geq 0}$ is just a proper increasing
well-ordering $\subseteq$-chain in $\mathcal{L}_\infty$, so from property 
$L_4)$ its union 
$P^{g_\omega}:\Lb\cup s(g_\omega)\to\Ai$
is an element of $\mathcal{L}_\infty$.
Since $\mathbb{G}\setminus \mathbb{L}$ is uncountable, the above 
inductive process can be repeated up to the next countable ordinal 
and so on;
this is essentially the way to extend $P$ to obtain the desired configuration $x$. 
With a better accuracy, we state that an element $v$ in 
$(\mathbb{G}\setminus\mathbb{L},<)$
satisfies the formula $\phi$, that is $\phi(v)$, if
for all $w< v$ there exists 
$P^w:\mathbb{L}\cup s(w)\to\mathcal{A}$ in 
$\mathcal{L}_\infty$ such that $P\subseteq P^w$ and 
$P^z\subseteq P^w$ for each $z< w$. Symbolically the definition
of $\phi$ is:
\begin{equation}\label{formula}
\forall w< v \,\exists P^w:\mathbb{L}\cup s(w)\to\mathcal{A} 
\in\mathcal{L}_\infty (P\subseteq P^w \wedge \forall z< w 
(P^z\subseteq P^w)).
\end{equation}  

Clearly $\phi(g_n)$ for all $n\geq 0$. Now fix any $u\in\Gb\setminus\Lb$ 
and suppose $\phi(v)$ for each
$v< u$. For such a point $v$ we consider the increasing 
well-ordering $\subseteq$-chain $(P^w)_{w< v}$ with
$P^w: \Lb\cup s(w)\to\Ai$ as in \eqref{formula}, thus from 
property $L_4)$ the union $P^v=\bigcup\nolimits_{w< v}P^w$ 
belongs to $\Li_\infty$ and 
$P^w\subseteq P^v$ for all
$w< v$. Hence, it follows from the principle of transfinite induction 
that $\phi$ is satisfied  
for all $u\in\mathbb{G}\setminus\mathbb{L}$. Note that
for each $u\in\mathbb{G}\setminus \mathbb{L}$ one has 
an increasing well-ordering $\subseteq$-chain $(P^v)_{v< u}$
in $\mathcal{L}_\infty$ with $P^w\subseteq P^v$ for all
$w<v$; moreover, from property $L_4$ 
the union $P^u=\bigcup\nolimits_{v< u}P^v$ is in $\Li_\infty$. 
Also note that $(P^u)_{u\in\mathbb{G}\setminus\mathbb{L}}$ is 
an increasing well-ordering $\subseteq$-chain with
$\bigcup_{g\in\mathbb{G}\setminus\mathbb{L}}dom\,P^u=\Gb$. So
we define
$x:\Gb\to\Ai$ by
$$
x(u)=\begin{cases}
P(u),\text{ if $u\in\Lb$}\\
P^{u^+}(u),\text{ if $u\notin\Lb$}
\end{cases}
$$
where $u^+$ is the successor of $u$ in $(\Gb\setminus\Lb,<)$,
we will prove that $x\in X$.
Take any $g\in\Hb$ and $\Mb\in [\Gb]^{<\infty}$. Since the pattern
$\sigma^g(x)\vert_{\Mb}$ is the $g^{-1}$-shifted of $x\vert_{g\Mb}$,
if one shows that $x\vert_{g\Mb}$ is in $\Li$, then so is
$\sigma^g(x)\vert_{\Mb}$; consequently $x\in X$ and $P\in\Li(X)$.
First note that if
$g\Mb\subset\Lb$, then $x\vert_{g\Mb}\in\Li$; 
indeed, it is a subpattern of $P$.
Now, if $g\Mb\setminus\Lb=\{u_1, \cdots, u_m\}$ with
$u_1< \cdots < u_m$, then $x\vert_{g\Mb}$ is a subpattern of
$P^{u_m^+}$. In this way we conclude that
$x\vert_{g\Mb}$ is in $\Li$ and the proof of 
Theorem \ref{principal} is complete.
\end{proof}
\section{Conclusions}
In this paper we have introduced the concept of $\mathbb{H}$-subshift, it extends
the classical notion of shift space as shift-invariant closed subset of the full shift 
$\mathcal{A}^\mathbb{Z}$ with $\mathcal{A}$ any finite nonempty set. 
In our extended notion the alphabet $\mathcal{A}$ 
supporting that concept is any discrete topological space and a group $\Gb$ replaces
the standard additive  group $\mathbb{Z}$. We only deal with some very basic 
properties of these shift spaces, particularly those related to the notion of language 
that this symbolic structure generates. As core of this paper we describe 
some basic properties of these languages, we also present an extended notion of them 
and offer sufficient conditions to characterize them. To achieve this last goal we 
use transfinite induction since the group G can be uncountable. 
The concept of $\mathbb{H}$-subshift and properties treated in this
paper could be considered as part of a starting point for 
subsequent work. First, to explore properties such as 
those discussed for the extensions of shift space exposed in \cite{cecc} and 
\cite{sobbo}; on the other hand, to study the morphisms derived from 
the $\mathbb{H}$-subshift notion.

\bigskip
\noindent
\paragraph{\em Acknowledgements:} This work was partially supported by the Consejo de Desarrollo Científico, Humanístico y
Tecnológico (CDCHT) of the Universidad Centroccidental Lisandro Alvarado under grant 
1186-RCT-2019. 


\end{document}